\newtheorem{theorem}{Theorem}[section]
\def\to{\rightarrow}
\def\c{\mathcal}
\def\b{\mathbf}
\def\r{\mathrm}
\date{}
\begin{document}
\title{A note on the best approximation in spaces of affine functions}
\author{Maysam Maysami Sadr\thanks{sadr@iasbs.ac.ir}}
\affil{Department of Mathematics, Institute for Advanced Studies in Basic Sciences, Zanjan, Iran}
\maketitle
\begin{abstract}
The proximinality of certain subspaces of spaces of bounded affine functions is proved.
The results presented here are some linear versions of an old result due to Mazur.
For the proofs we use some sandwich theorems of Fenchel's duality theory.

\textbf{MSC 2010.} Primary 41A65; Secondary 52A07

\textbf{Keywords.} Best approximation, convex set, affine function, sandwich theorem
\end{abstract}
\section{Introduction}
Let $S$ be a metric space and $T$ be a subspace of $S$. Then $T$ is called proximinal in $S$ if every element of $S$
has a best approximation by elements of $T$ i.e. for every $s\in S$ there exists $t_s\in T$ such that
$$d(s,t_s)=d(s,T):=\inf_{t\in T} d(s,t).$$
The proximinality problem for linear subspaces of normed (function) spaces \cite{Singer1} has been considered by many authors.
One of the early results in this direction is due to Mazur:
\begin{theorem}
Let $X,Y$ be compact Hausdorff spaces and $\phi:X\to Y$ be a surjective continuous map.
Let $\b{C}(X),\b{C}(Y)$ denote the Banach spaces of real valued continuous functions on $X,Y$ with supremum norm.
Then the image of $\b{C}(Y)$ in $\b{C}(X)$ under the canonical linear map induced by $\phi$, is proximinal.
\end{theorem}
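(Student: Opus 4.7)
The strategy is to construct the candidate best approximation through the fibres of $\phi$ and then appeal to a sandwich theorem to secure its continuity.

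Given $f\in\b{C}(X)$, I would first introduce the two fibre-wise extremal functions on $Y$,
\[ f^+(y):=\sup\{f(x):x\in\phi^{-1}(y)\},\qquad f^-(y):=\inf\{f(x):x\in\phi^{-1}(y)\}. \]
Surjectivity of $\phi$ keeps the fibres non-empty and compactness of $X$ makes them compact, so the extrema are attained. Since $\phi$ is a continuous map from a compact space to a Hausdorff space it is closed, and the identity $\{f^+\geq c\}=\phi(\{f\geq c\})$ (together with its dual for $f^-$) shows that $f^+$ is upper semicontinuous while $f^-$ is lower semicontinuous on $Y$.

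For any $g\in\b{C}(Y)$ and any $y\in Y$, evaluating $|f(x)-g(\phi(x))|$ on the fibre $\phi^{-1}(y)$ yields
\[ \|f-g\circ\phi\|_\infty\ \geq\ \max\bigl(f^+(y)-g(y),\ g(y)-f^-(y)\bigr)\ \geq\ \tfrac12\bigl(f^+(y)-f^-(y)\bigr), \]
so $d(f,\phi^*\b{C}(Y))$ is bounded below by $\rho:=\tfrac12\|f^+-f^-\|_\infty$. Conversely, a $g\in\b{C}(Y)$ realises this bound exactly when
\[ f^+(y)-\rho\ \leq\ g(y)\ \leq\ f^-(y)+\rho\qquad(y\in Y), \]
and by the very choice of $\rho$ the upper semicontinuous lower bound is everywhere dominated by the lower semicontinuous upper bound.

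Since $Y$ is compact Hausdorff, hence normal, the Kat\v{e}tov--Tong sandwich theorem produces a continuous $g$ squeezed between these two bounds; this $g$ then satisfies $\|f-g\circ\phi\|_\infty\leq\rho$ and is therefore a best approximation to $f$. The main obstacle I expect is the sandwich step itself: it is clean when the target is $\b{C}(Y)$, but for the spaces of bounded affine functions that the paper ultimately treats, Kat\v{e}tov--Tong is not directly available and must be replaced by a sandwich theorem from Fenchel duality separating a concave majorant from a convex minorant by an affine function, which is presumably where the technical heart of the paper lies.
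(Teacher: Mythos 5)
Your proposal is correct and is essentially the classical Mazur argument that the paper cites (Semadeni, p.~124): fibre-wise envelopes $f^+,f^-$, semicontinuity via closedness of $\phi$, the lower bound $\rho=\tfrac12\|f^+-f^-\|_\infty$, and the Hahn--Tong (Kat\v{e}tov--Tong) sandwich theorem to produce the continuous best approximant. It is also exactly the template the paper adapts in Theorems 2.2 and 2.3, where, as you anticipate, the only real change is replacing Hahn--Tong by a Fenchel-duality sandwich theorem separating the concave envelope $f^\uparrow$ from the convex envelope $f^\downarrow$ by an affine function.
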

Mazur's proof can be found in the Monograph of Semadeni \cite{Semadeni1} page 124. It uses the Hahn-Tong Sandwich Theorem
\cite[Theorem 6.4.4]{Semadeni1} for existence of continuous functions between upper and lower semicontinuous real valued functions.
The Mazur result have been extended for spaces of complex valued functions by Pe{\l}czy\'{n}ski \cite{Pelczynski1} and for vector valued
functions by Olech \cite{Olech1} and Blatter \cite{Blatter1}. There is also a generalization for vector spaces of `continuous functions'
on `noncommutative spaces' in terms of C*-algebras \cite{Somerset1}. For more details see \cite[page 15]{Singer1}.

In this short note we show that some analogs of the Mazur result are satisfied for spaces of bounded affine functions.
Our proofs are linear versions of the Mazur proof. But instead of the Hahn-Tong Theorem we use some sandwich theorems of Fenchel's duality theory
for existence of affine functions between concave and convex functions.
\section{The Results}
For a nonempty convex set $C$ we denote by $\c{A}_\r{b}(C)$ the normed space of all bounded affine real valued functions on
$C$ with supremum norm. If $C$ is a compact convex subset of a topological vector space we denote by $\c{A}_\r{c}(C)$ the closed subspace
of $\c{A}_\r{b}(C)$ containing all continuous affine functions. Let $C,D$ be two (compact) convex sets and $\phi:C\to D$ be a
(continuous) surjective affine map. Then $\phi$ induces an isometric linear isomorphism
$\tilde{\phi}$ from ($\c{A}_\r{c}(D)$) $\c{A}_\r{b}(D)$ into ($\c{A}_\r{c}(C)$) $\c{A}_\r{b}(C)$ defined by
$\tilde{\phi}(h):=h\circ \phi$. In what follows we identify $\c{A}_\r{b}(D)$ (resp. $\c{A}_\r{c}(D)$) as a closed subspace of
$\c{A}_\r{b}(C)$ (resp. $\c{A}_\r{c}(C)$).
\begin{theorem}\label{1711101643}
Let $C,D$ be two convex sets and $\phi:C\to D$ be a surjective affine map. Then $\c{A}_\r{b}(D)$, as a subspace of $\c{A}_\r{b}(C)$
via $\tilde{\phi}$, is  proximinal.
\end{theorem}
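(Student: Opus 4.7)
Fix $f \in \c{A}_\r{b}(C)$. My plan is to adapt Mazur's strategy in a linear setting: construct the fibrewise upper and lower envelopes of $f$ along $\phi$, and sandwich them by an affine function through a Fenchel-type theorem (in place of Hahn--Tong). Define
\[
F(y) := \sup\{f(x) : x \in \phi^{-1}(y)\}, \qquad G(y) := \inf\{f(x) : x \in \phi^{-1}(y)\}, \qquad y \in D.
\]
Surjectivity of $\phi$ and boundedness of $f$ make $F, G : D \to \mathbb{R}$ well-defined and bounded by $\|f\|_\infty$. The affinity of $f$ and $\phi$ (given $x_i \in \phi^{-1}(y_i)$ one has $tx_1 + (1-t)x_2 \in \phi^{-1}(ty_1 + (1-t)y_2)$ and $f$ takes the value $tf(x_1) + (1-t)f(x_2)$) then yields, upon passing to sup and inf over the fibres, that $F$ is concave, $G$ is convex, and $G \leq F$ on $D$.

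Set $d := \tfrac12\|F - G\|_\infty$. For any $h \in \c{A}_\r{b}(D)$ and any $y \in D$, evaluating $|f - h\circ\phi|$ over the fibre $\phi^{-1}(y)$ yields both $F(y) - h(y) \leq \|f - h\circ\phi\|_\infty$ and $h(y) - G(y) \leq \|f - h\circ\phi\|_\infty$, so $F - G \leq 2\|f - h\circ\phi\|_\infty$; taking the supremum over $y$ gives $d \leq d(f, \c{A}_\r{b}(D))$.

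For the reverse inequality and the existence of a best approximator, note that $F - d \leq G + d$ on $D$. The decisive step is to invoke a Fenchel-type sandwich theorem: since $F - d$ is a bounded concave function dominated by the bounded convex function $G + d$ on the convex set $D$, there is an affine $h : D \to \mathbb{R}$ with $F - d \leq h \leq G + d$, automatically bounded by the sandwich bounds. Then for any $x \in C$ with $y = \phi(x)$,
\[
h(y) - d \leq G(y) \leq f(x) \leq F(y) \leq h(y) + d,
\]
so $\|f - h\circ\phi\|_\infty \leq d$, and $h \in \c{A}_\r{b}(D)$ realises the best approximation.

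\textbf{Main obstacle.} The crux is the sandwich step itself. Since $D$ is endowed with no topology, one needs the purely algebraic/bounded version of Fenchel's sandwich theorem, which can be established by separating the hypograph of $F - d$ from the epigraph of $G + d$ in $V \times \mathbb{R}$ (for $V$ any vector space containing $D$) by a hyperplane via the Hahn--Banach/Eidelheit theorem; non-verticality of the hyperplane, which is needed to read it as the graph of an affine function, follows because both convex sets project onto all of $D$. Everything else -- the concavity/convexity of the envelopes, the trivial lower bound on $d$, and the final pointwise estimate -- is routine.
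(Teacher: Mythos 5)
Your argument is correct and essentially identical to the paper's proof: the same fibrewise envelopes ($F=f^\uparrow$ concave, $G=f^\downarrow$ convex), the same lower bound $d(f,\tilde\phi(\mathcal{A}_{\mathrm{b}}(D)))\ge \tfrac12\|F-G\|_\infty$ obtained by evaluating $|f-h\circ\phi|$ over a fibre, and the same sandwich step $F-d\le h\le G+d$ producing the affine best approximant. The only divergence is that the paper simply cites the algebraic sandwich theorem (Borwein--Vanderwerff, Corollary 2.4.1) rather than proving it; note that your proposed derivation via Eidelheit separation of the hypograph and epigraph requires a core (algebraic interior) point, which a general convex set $D$ in an infinite-dimensional space need not have, so it is safer to invoke the Hahn--Banach/K\"onig form of the sandwich theorem as the paper does.
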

\begin{proof}
Suppose that $f\in\c{A}_\r{b}(C)$. We must show that there exists $h_0\in\c{A}_\r{b}(D)$ such that
$\|f-h_0\circ\phi\|=d$ where
$$d:=\inf\{\|f-h\circ\phi\|:h\in\c{A}_\r{b}(D)\}.$$
Let $c:=\sup_{y\in D} r(y)$ where
$$r(y):=\sup_{x,x'\in\phi^{-1}(y)}(f(x)-f(x')).$$
For $h\in\c{A}_\r{b}(D)$ and $x,x'\in\phi^{-1}(y)$ we have,
$$|f(x)-f(x')|\leq|f(x)-h\circ\phi(x)|+|f(x')-h\circ\phi(x')|.$$
This shows that
\begin{equation}\label{1711101642-1}
\|f-h\circ\phi\|\geq2^{-1}c
\end{equation}
and
\begin{equation}\label{1711101642-2}
d\geq2^{-1}c.
\end{equation}
Let $f^\downarrow,f^\uparrow$ be bounded real valued functions
on $D$ defined by
$$f^\downarrow(y):=\inf_{x\in\phi^{-1}(y)}f(x)$$
and
$$f^\uparrow(y):=\sup_{x\in\phi^{-1}(y)}f(x).$$
Then it is easily verified
that $f^\downarrow$ is convex and $f^\uparrow$ is concave. Also,
\begin{equation}\label{1711101644-1}
r(y)=f^\uparrow(y)-f^\downarrow(y)
\end{equation}
and
\begin{equation}\label{1711101644-2}
f^\uparrow-2^{-1}c\leq f^\downarrow+2^{-1}c.
\end{equation}
By the Sandwich theorem \cite[Corollary 2.4.1]{BorweinVanderwerff1} there is an affine function $h_0:D\to\mathbb{R}$ with
\begin{equation}\label{1711101646}
f^\uparrow-2^{-1}c\leq h_0\leq f^\downarrow+2^{-1}c.
\end{equation}
Thus $h_0\in\c{A}_\r{b}(D)$ and $\|f-h_0\circ\phi\|\leq2^{-1}c$. Hence, by (\ref{1711101642-1}) and (\ref{1711101642-2}), $\|f-h_0\circ\phi\|=d$.
\end{proof}
A continuous version of Theorem \ref{1711101643} is as follows.
\begin{theorem}
Let $C$ be an arbitrary compact convex set and $D$ be a compact convex subset of a Fr\'{e}chet topological vector space.
Let $\phi:C\to D$ be a surjective continuous affine map. Then $\c{A}_\r{c}(D)$, as a subspace of $\c{A}_\r{c}(C)$ via
$\tilde{\phi}$, is  proximinal.
\end{theorem}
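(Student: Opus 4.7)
The plan is to imitate the proof of Theorem~\ref{1711101643} almost verbatim, adjusting only the two points where continuity enters. First I introduce, for a given $f\in\c{A}_\r{c}(C)$, the same scalars $d$ and $c$, the function $r(y)$, and the envelopes $f^\downarrow$ and $f^\uparrow$. The inequalities (\ref{1711101642-1}) and (\ref{1711101642-2}), together with the identity (\ref{1711101644-1}) and the inequality (\ref{1711101644-2}), carry over unchanged, because their derivations only use manipulations of fibre suprema and infima; in particular $f^\uparrow$ remains concave, $f^\downarrow$ remains convex, and both are bounded on $D$.

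The first new ingredient is the semicontinuity of the envelopes. Since $C$ and $D$ are compact Hausdorff and $\phi$ is continuous, $\phi$ is a closed map with compact fibres, and the continuity of $f$ then yields, for every $\alpha\in\m{R}$,
$$\{y\in D:f^\uparrow(y)\geq\alpha\}=\phi(\{x\in C:f(x)\geq\alpha\}),\qquad\{y\in D:f^\downarrow(y)\leq\alpha\}=\phi(\{x\in C:f(x)\leq\alpha\}),$$
both closed in $D$. Hence $f^\uparrow$ is upper semicontinuous and $f^\downarrow$ is lower semicontinuous, and the fibre extrema in their definitions are attained.

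The second and decisive new ingredient is a \emph{continuous} variant of the sandwich theorem. Applied to the concave upper semicontinuous function $f^\uparrow-2^{-1}c$ and the convex lower semicontinuous function $f^\downarrow+2^{-1}c$ on the compact convex set $D$, a Fenchel-type sandwich theorem valid in the Fr\'echet setting should produce a continuous affine $h_0\in\c{A}_\r{c}(D)$ with
$$f^\uparrow-2^{-1}c\leq h_0\leq f^\downarrow+2^{-1}c,$$
after which the estimate $\|f-h_0\circ\phi\|\leq 2^{-1}c$ and the identification $\|f-h_0\circ\phi\|=d$ follow exactly as in the previous proof. I expect the only real obstacle to be locating or formulating the appropriate continuous affine sandwich theorem: the hypothesis that $D$ sits inside a Fr\'echet space is there precisely to guarantee a continuous affine separator via Fenchel duality, local convexity supplying the separation and metrizability/completeness entering through the standard constraint qualifications of such sandwich results.
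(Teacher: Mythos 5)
Your proposal is correct and follows essentially the same route as the paper: same envelopes $f^\uparrow,f^\downarrow$, same inequalities, semicontinuity of the envelopes, then a continuous affine sandwich theorem. The two points you flag are handled in the paper by citations: the semicontinuity is \cite[Lemma 7.5.5]{Semadeni1} (your direct argument via compact fibres and closedness of $\phi$ is a valid self-contained substitute), and the ``continuous affine sandwich theorem in the Fr\'echet setting'' that you correctly identify as the decisive missing ingredient is exactly Noll's theorem \cite[Theorem 6(2)]{Noll1}, which supplies a continuous affine function between an upper semicontinuous concave function and a dominating lower semicontinuous convex function on a compact convex subset of a Fr\'echet space.
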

\begin{proof}
Suppose that $f\in\c{A}_\r{c}(C)$. Let
$$d:=\inf\{\|f-h\circ\phi\|:h\in\c{A}_\r{c}(D)\}.$$
Also, let $r(y),c,f^\uparrow,f^\downarrow$
be as in the proof of Theorem \ref{1711101643}. Thus, for every $h\in\c{A}_\r{c}(D)$, (\ref{1711101642-1}), (\ref{1711101642-2}),
(\ref{1711101644-1}), and (\ref{1711101644-2}) are satisfied.
It follows from \cite[Lemma 7.5.5]{Semadeni1} that $f^\downarrow$ and $f^\uparrow$ are respectively lower and upper semicontinuous functions.
By the Sandwich theorem \cite[Theorem 6(2)]{Noll1} of Noll, there exists a continuous affine function $h_0:D\to\mathbb{R}$ satisfying
(\ref{1711101646}). Thus $h_0\in\c{A}_\r{c}(D)$ and $\|f-h_0\circ\phi\|=d$. The proof is complete.
\end{proof}

\end{document}